\documentclass[11pt]{amsart}
\input epsf
\usepackage{latexsym}
\usepackage[usenames,dvipsnames]{color}
\usepackage[colorlinks=true,linkcolor=blue,citecolor=BrickRed,urlcolor=RoyalBlue]{hyperref}
            
\textwidth=5.5in
\textheight=8in
\oddsidemargin=0.5in
\evensidemargin=0.5in
\topmargin=.5in          
            
%
\parindent=0pt\parskip=8pt plus4pt minus4pt
\usepackage{amssymb,amsmath,amsthm,amscd, amssymb,
graphicx, enumerate, verbatim, mathrsfs, color, url, bm, hyperref, parskip}
\usepackage[all]{xy}\usepackage[labelformat=empty]{caption}

\usepackage[left]{lineno}




\newtheorem{lem}[equation]{Lemma}

\newtheorem{thm}[equation]{Theorem}

\newtheorem{Example}[equation]{Example}

\newtheorem{remark}[equation]{Remark}

\def\co{\colon\thinspace}

\newcommand{\diam}{\mbox{diam}}

\newcommand{\e}{\varepsilon}

\def\G{\Gamma}

\def\b{\beta}

\def\s{\sigma}

\def\S1{\bf S^1}

\mathsurround=1pt

\makeatletter
\def\equalsfill{$\m@th\mathord=\mkern-7mu
\cleaders\hbox{$\!\mathord=\!$}\hfill
\mkern-7mu\mathord=$}
\makeatother

\begin{document}

\abovedisplayskip=6pt plus3pt minus3pt
\belowdisplayskip=6pt plus3pt minus3pt

\title[Iterated circle bundles and infranilmanifolds]
{\bf Iterated circle bundles and infranilmanifolds}

\keywords{nilmanifold, almost flat, circle bundle, nilpotent group.}
\thanks{\it 2010 Mathematics Subject classification.\rm\ Primary 20F18, Secondary 57R22.}
\thanks{This work was partially supported by the Simons Foundation grant 524838.}

\author{Igor Belegradek}
\address{Igor Belegradek\\ School of Mathematics\\ Georgia Tech\\ Atlanta, GA, USA 30332\vspace*{-0.05in}}
\email{ib@math.gatech.edu}

\begin{abstract}
We give short proofs of the following two facts:
Iterated principal circle bundles are precisely the nilmanifolds.
Every iterated circle bundle is almost flat, and hence
diffeomorphic to an infranilmanifold.

\end{abstract}
\maketitle
\thispagestyle{empty}

A {\em infranilmanifold\,} is a closed manifold diffeomorphic to
the quotient space $N/\Gamma$ of a simply-connected nilpotent Lie group $N$ by
a discrete torsion-free subgroup $\Gamma$ of the semidirect product $N\rtimes C$
where $C$ is a maximal compact subgroup of $\mathrm{Aut}(N)$.
If $\G$ lies in the $N$ factor, the infranilmanifold is called a {\em nilmanifold.} 

An {\em iterated circle bundle\,} 
is defined inductively as the total space of a circle bundle 
whose base is an iterated circle bundle of one dimension lower,
and the base at the first step is a point. If at each step the circle bundle
is principal, the result is an {\em iterated principal circle bundle.} 

This note was prompted by a question of Xiaochun Rong who
asked me to justify the following fact mentioned in~\cite{BelWei-gafa}:

\begin{thm}
\label{thm: nil}
A manifold is an iterated principal circle bundle if and only if 
it is a nilmanifold.
\end{thm}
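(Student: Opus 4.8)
The plan is to prove each implication by induction on dimension, using Mal'cev's theory of lattices in nilpotent Lie groups as the engine. Throughout I write a nilmanifold as $N/\Gamma$ with $N$ simply connected nilpotent and $\Gamma\subset N$ a lattice; recall that such a $\Gamma$ is automatically finitely generated and torsion-free, and that $N$ is recovered from $\Gamma$ as its Mal'cev completion.

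For the direction \emph{nilmanifold $\Rightarrow$ iterated principal circle bundle}, I would proceed as follows. The center $Z(N)$ is a nontrivial connected subgroup, and by Mal'cev's theorem $\Gamma\cap Z(N)$ is a lattice in $Z(N)$. A primitive vector $v$ of this lattice spans a one-dimensional central subgroup $L\cong\mathbb{R}$ with $\Gamma\cap L=\mathbb{Z}v$ cocompact, so that $L/(\Gamma\cap L)\cong S^1$. Since $L$ is central it is normal, $\bar N:=N/L$ is again simply connected nilpotent, and the image $\bar\Gamma$ of $\Gamma$ is a lattice in $\bar N$; thus $\bar N/\bar\Gamma$ is a nilmanifold of one lower dimension. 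The quotient map $N/\Gamma\to\bar N/\bar\Gamma$ is then a principal circle bundle: left translation by $L$ descends to a free action of $S^1=L/(\Gamma\cap L)$ on $N/\Gamma$ (both well-definedness and freeness use centrality of $L$), with orbit space $\bar N/\bar\Gamma$. By induction the base is an iterated principal circle bundle, hence so is $N/\Gamma$, the base case being a point.

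For the converse, suppose $M$ is a principal circle bundle over a base $B$ which, by induction, is a nilmanifold $N'/\Gamma'$; here $\Gamma'$ is finitely generated, torsion-free, and nilpotent. Because $B$ and the fiber $S^1$ are aspherical, $M$ is aspherical, and the homotopy exact sequence of the bundle yields a central extension $1\to\mathbb{Z}\to\Gamma\to\Gamma'\to1$ with $\Gamma=\pi_1(M)$; centrality holds because the connected structure group $S^1$ acts trivially on $\pi_1(S^1)$, and the extension class in $H^2(B;\mathbb{Z})=H^2(\Gamma';\mathbb{Z})$ is precisely the Euler class $e$ of the bundle. A central extension of a nilpotent group by $\mathbb{Z}$ is nilpotent, and an extension of a torsion-free group by a central torsion-free $\mathbb{Z}$ is again torsion-free; hence $\Gamma$ is finitely generated, torsion-free, and nilpotent. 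Its Mal'cev completion $N$ is therefore a simply connected nilpotent Lie group, and by functoriality the surjection $\Gamma\to\Gamma'$ completes to a central extension $1\to\mathbb{R}\to N\to N'\to1$ (the kernel is one-dimensional and central by the Hirsch-rank count). The resulting nilmanifold $N/\Gamma$ is thus a principal $S^1$-bundle over $N'/\Gamma'\cong B$ whose Euler class is again $e$.

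It remains to upgrade the coincidence of fundamental groups into a diffeomorphism, and this is the step I expect to be the crux. Rather than invoking topological rigidity, I would finish using the classification of principal circle bundles: over the fixed base $B$ such bundles are determined up to isomorphism by their Euler class in $H^2(B;\mathbb{Z})$. Since both $M$ and $N/\Gamma$ have Euler class $e$, they are isomorphic as principal bundles, and in particular $M$ is diffeomorphic to the nilmanifold $N/\Gamma$. The technical points to verify with care are that $\Gamma\cap Z(N)$ is a lattice in the first direction, and that the Mal'cev completion of the extension genuinely realizes the bundle $M$ with the correct integral Euler class in the second.
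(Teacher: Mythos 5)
Your proof is correct, and it shares the paper's overall skeleton: induction on dimension, Mal'cev theory to produce a central one-parameter subgroup and the associated quotient circle bundle, and the classification of principal circle bundles by the Euler class. The ``if'' direction is essentially the paper's argument (you extract the central circle from the fact that $\Gamma\cap Z(N)$ is a lattice in $Z(N)$, while the paper gets it from $Z(\Gamma)\subset Z(N)$ via the automorphism-extension theorem; both are standard Mal'cev facts). The genuine divergence is in the ``only if'' direction, at the step where the two circle bundles over $B$ are compared. The paper forms a homotopy-commutative square from the isomorphism $\pi_1(E)\cong\Gamma$, uses Mal'cev rigidity to replace the base homotopy equivalence by a diffeomorphism, and compares Euler classes through naturality of the Gysin sequence, obtaining equality only up to sign, which is then fixed by reorienting a fiber. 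You instead invoke the fact that for a circle bundle with aspherical base the Euler class equals the extension class of its $\pi_1$-sequence under $H^2(\pi_1(B);\mathbb{Z})\cong H^2(B;\mathbb{Z})$; since your two bundles induce literally the same central extension $1\to\mathbb{Z}\to\Gamma\to\Gamma'\to1$, their Euler classes coincide outright (once the fiber of $N/\Gamma\to N'/\Gamma'$ is oriented so that $z$ is the positive generator of the kernel). Your route is shorter and avoids both the square and the ``homotopy equivalence of nilmanifolds is homotopic to a diffeomorphism'' step, but it hinges on the Euler-class/extension-class identification, which you assert without proof or reference; it is true and standard (central extensions by $\mathbb{Z}$ correspond to maps $K(\Gamma',1)\to K(\mathbb{Z},2)$, and the bundle is pulled back from $K(\mathbb{Z},2)$ by a map representing $e$), but it is exactly the content that the paper's Gysin-sequence argument supplies by elementary means, so you should prove it or cite it. Two small repairs: centrality of the kernel $\mathbb{R}\le N$ does not follow from the Hirsch-rank count alone, which only gives $\dim=1$; it follows because a one-dimensional ideal of a nilpotent Lie algebra is central, or because $z\in Z(\Gamma)$ forces $z\in Z(N)$ by the automorphism-extension theorem. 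Also, you should use uniqueness of the Mal'cev completion to identify the completion of $\Gamma'$ with the given $N'$ compatibly with $\Gamma'$, so that your constructed bundle genuinely has base $B=N'/\Gamma'$ and the cohomological identifications are canonical.
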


The proof of Theorem~\ref{thm: nil} combines some bundle-theoretic considerations
with classical results of Mal'cev~\cite{Mal}. 
The ``if'' direction was surely known since~\cite{Mal}
but~\cite[Proposition 3.1]{FadHus} seems to be the earliest reference. 
The statement of Theorem~\ref{thm: nil} is mentioned 
without proof in~\cite[p.98]{Weinb-book} and \cite[p.122]{FOT}. 

{\bf Summary of previous work:}

(1) 
Every iterated principal circle bundle has torsion-free nilpotent 
fundamental group because the homotopy exact sequence converts
a principal circle bundle into a central extension with infinite cyclic kernel.

(2) 
Theorem 1.2 of~\cite{Nak} implies that  every iterated principal circle bundles 
is diffeomorphic to an infranilmanifold; this was explained to me by Xiaochun Rong. 
Thus~\cite{Nak} gives another (less elementary) proof 
of the ``only if'' direction in Theorem~\ref{thm: nil} because
every iterated principal circle bundle is homotopy equivalent to a nilmanifold,
and the diffeomorphism type of an infranilmanifold is determined by its homotopy type~\cite{LeeRay}.

(3)
According to~\cite{PalStu} a manifold is a principal torus bundle over a torus if and only if 
it is a nilmanifold modelled on a two-step nilpotent Lie group.

(4) Every $3$-dimensional infranilmanifold has a unique Seifert fiber space 
structure, see~\cite[Theorem 3.8]{Sco}, hence it is an iterated circle bundle if and only if the base
orbifold (of the Seifert fibering) is non-singular, i.e., the $2$-torus or the Klein bottle.
Thus iterated circle bundles are rare among $3$-dimensional infranilmanifolds.

(5) In~\cite{LeeMas} it is proven that every iterated circle bundle is homeomorphic 
to an infranilmanifold. Their argument splits in two parts: finding a homotopy equivalence 
and upgrading it to a homeomorphism. The latter uses topological surgery,
which does not extend to the smooth setting. 

(6) A natural way to establish the smooth version of the above-mentioned result in~\cite{LeeMas}
is to show that every iterated circle bundle is almost flat, and then
apply the celebrated work of Gromov-Ruh~\cite{Gro, Ruh} that infranilmanifolds
are precisely the almost flat manifolds. Recall that a closed manifold is
{\em almost flat\,} if it admits a sequence of Riemannian metrics 
of uniformly bounded diameters and sectional 
curvatures approaching zero. To this end we prove:

\begin{thm}
\label{thm: infra}
Any iterated circle bundle is almost flat, and therefore diffeomorphic to
an infranilmanifold.
\end{thm}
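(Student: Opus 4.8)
The plan is to argue by induction on the length $n$ of the tower $M=M_n\to M_{n-1}\to\cdots\to M_0=\mathrm{pt}$, the inductive step being the assertion that the total space of a circle bundle over an almost flat manifold is again almost flat. The base of the induction is trivial (a point, or a circle, is flat), and once the inductive step is established, Theorem~\ref{thm: infra} follows by iterating it $n$ times and then invoking the Gromov--Ruh theorem~\cite{Gro, Ruh}. So let $\pi\co E\to B$ be a circle bundle and suppose $B$ carries metrics $g_t$ with $\diam(B,g_t)\le D$ and $|\sec(g_t)|\le\e_t\to 0$. I would realize $E$ as the unit circle bundle of a rank-$2$ Euclidean vector bundle $\xi\to B$, which is possible since every smooth circle bundle reduces its structure group to $O(2)$, and fix a metric connection on $\xi$. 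This produces, for each $t$, a horizontal distribution on $E$ together with a fiberwise round metric, hence a Riemannian submersion metric $g_{t,\delta}=\pi^*g_t+\delta^2\theta^2$ on $E$, where $\theta$ is the (locally defined) connection $1$-form and $\delta>0$ scales the circle fibers.

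Because the structure group acts on the fibers by isometries, the fiber length is constant, so the fibers are totally geodesic in $g_{t,\delta}$ and O'Neill's submersion formulas apply in their simplest form. The curvature of $g_{t,\delta}$ is then controlled by these identities: every sectional curvature equals a base sectional curvature (bounded by $\e_t$) plus correction terms built from the connection curvature $\Omega=d\theta$ and its covariant derivative. Tracking the powers of $\delta$, the horizontal and mixed sectional curvatures contribute terms of size $O\!\left(\delta^2\,\sup_B|\Omega|_{g_t}^2\right)$, while the mixed planes also pick up a term of size $O\!\left(\delta\,\sup_B|\nabla^{g_t}\Omega|_{g_t}\right)$. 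For each fixed $t$ the suprema $\sup_B|\Omega|_{g_t}$ and $\sup_B|\nabla^{g_t}\Omega|_{g_t}$ are finite by compactness of $B$, so one may choose $\delta=\delta_t\le 1$ small enough that all correction terms are at most $\e_t$. Then $|\sec(g_{t,\delta_t})|\le 2\e_t\to 0$, while $\diam(E,g_{t,\delta_t})\le D+2\pi\delta_t\le D+2\pi$, since shrinking the fibers only shortens the fiber directions. Hence $E$ is almost flat, completing the inductive step.

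The one point that requires care is precisely this last estimate. When $B$ is a collapsing nilmanifold, the pointwise norm $\sup_B|\Omega|_{g_t}$ of the fixed cohomological curvature class may blow up as $\e_t\to 0$, so one cannot shrink all fibers by a single uniform amount and hope to keep the curvature small. The resolution, and the reason the argument is elementary, is that almost flatness only demands a \emph{sequence} of metrics: this lets us choose the fiber scale $\delta_t$ \emph{after}, and depending on, the base metric $g_t$, thereby decoupling the two collapses. I expect verifying the $\delta$-scaling of the O'Neill correction terms, and isolating this decoupling, to be the main content of the step.

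For non-principal (non-orientable) bundles the curvature $\Omega$ is a $2$-form with values in the flat orientation line bundle of the fibers rather than a genuine real $2$-form, but O'Neill's formulas are local and the estimates above are unaffected, so the same choice of $\delta_t$ works. An iterated circle bundle is obtained by $n$ applications of the inductive step starting from a point; it is therefore almost flat, and by Gromov--Ruh~\cite{Gro, Ruh} diffeomorphic to an infranilmanifold, which is Theorem~\ref{thm: infra}.
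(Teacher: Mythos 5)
Your proof is correct and takes essentially the same route as the paper: reduction to Gromov--Ruh, induction over the tower, a connection (Kaluza--Klein) metric on the circle bundle with totally geodesic fibers shrunk by a scale chosen \emph{after} the base metric, and O'Neill-type estimates in which the corrections are $O(\delta^2|\Omega|^2)$ and $O(\delta\,|\nabla\Omega|)$ --- this is exactly the paper's canonical-variation computation from Besse 9.59--9.69, with $t=\delta^2$ and corrections $O(\sqrt t)$. The ``decoupling'' you isolate (choosing $\delta_t$ depending on $g_t$) is precisely how the paper's curvature lemma is applied, one base metric at a time.
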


\begin{proof}[Proof of Theorem~\textup{\ref{thm: nil}}]
We use~\cite[Chapter II]{Rag-book} as a reference for Mal'cev's work.
If $N/\Gamma$ is a nilmanifold, then $\Gamma$ 
is finitely generated, torsion-free, and nilpotent, and conversely, any such group
is the fundamental group of a nilmanifold, see~\cite[Theorem 2.18]{Rag-book}.
Every automorphism of $\Gamma$ extends uniquely to an automorphism of 
$N$, see~\cite[Theorem 2.11]{Rag-book}.
Applying this to conjugation by an element of the center of $\Gamma$ we get
the inclusion of centers $Z(\Gamma)\subset Z(N)$. 
Nilpotency of $\G$ ensures that $Z(\Gamma)$ is nontrivial, and therefore,
there is a one-parameter subgroup $R\le Z(N)$ such that $R\cap Z(\Gamma)$ is nontrivial, and hence
infinite cyclic. Clearly $R\cap \Gamma=R\cap Z(\Gamma)$. 
The left $R$-action on $N$
descends to a free $R/(R\cap \Gamma)$-action on $N/\Gamma$, which makes
$N/\Gamma$ into a principal circle bundle whose base $B_{_\Gamma}$ is a nilmanifold, namely,
the quotient of $N/R$ by $\Gamma/(R\cap\Gamma)$.
This proves the ``if'' direction.

Conversely, let $p\co E\to B$ be a principal circle bundle over a nilmanifold $B$. 
Its homotopy exact sequence is a central extension, 
so $\pi_1(E)$ is finitely generated torsion-free nilpotent. Consider
a nilmanifold $N/\Gamma$ with $\Gamma\cong\pi_1(E)$, and
let $z\in Z(\Gamma)$ be the element corresponding to the circle fiber of $p$
through the basepoint.
Let $R\le N$ be the one-parameter subgroup that contains $z$. 
As above $R\subset Z(N)$ and
$N/\Gamma$ is the total space of a principal circle bundle $p_{_\Gamma}\co N/\Gamma\to B_{_\Gamma}$
whose base $B_{_\Gamma}$ is a nilmanifold and the
fibers are the $R/(R\cap \Gamma)$-orbits. 
The cyclic group $R\cap \Gamma$ is generated by $z$ 
because its generator projects to a finite order element 
in the torsion-free group $\G/\langle z\rangle\cong\pi_1(B)$.
Thus the isomorphism $\pi_1(E)\cong\pi_1(N/\Gamma)$ descends to
an isomorphism $\pi_1(B)\to \pi_1(B_{_\Gamma})$.
Since all these manifolds are aspherical, the fundamental group isomorphisms
are induced by homotopy equivalences, and we get a homotopy-commutative square
\begin{equation*}
\xymatrix{
E\ar[d]_p\ar[r]^{\!\e}&
N/\Gamma\ar[d]^{p_{_{\Gamma}}}\\
B\ar[r]_{\!\b} & B_{_\Gamma}
} 
\end{equation*}
where $\e$ and $\b$ are homotopy equivalences.
We can assume that $\b$ is a diffeomorphism because by~\cite[Theorem 2.11]{Rag-book} 
any homotopy equivalence of nilmanifolds is homotopic to a 
diffeomorphism. 
The Gysin sequence implies that the Euler class of a circle bundle generates 
the kernel of the homomorphism induced on the second cohomology
by the bundle projection.
The map of the Gysin sequences of $p$ and $p_{_{\Gamma}}$ induced by
the commutative square shows that
$\b$ preserves their Euler classes up to sign, and after changing
the orientation if necessary we can assume that the Euler classes are preserved by $\b$.
The isomorphism type of a principal circle bundle 
is determined by its Euler class.
Since $p$ and the pullback of $p_{_\Gamma}$ via $\beta$ have the same Euler class, they 
are isomorphic, which gives a desired diffeomorphism of $E$ and $N/\Gamma$
and completes the proof of the ``only if'' direction. 
\end{proof}

\begin{proof}[Proof of Theorem~\ref{thm: infra}] 
In view of~\cite{Gro, Ruh} 
it is enough to prove inductively that the total space of 
any circle bundle over an almost flat manifold 
is almost flat. This comes via the following standard argument.
Let $p\co E\to B$ be a smooth circle bundle over a closed manifold $B$.
For any Riemannian metric $\check g$ on $B$ there is a metric 
$g$ on $E$ such that $p$ is a Riemannian submersion with totally geodesic fibers
which are isometric to the unit circle, see~\cite[9.59]{Bes-book}. 
As in~\cite[9.67]{Bes-book}
let $g^t$ be the metric on $E$ obtained by rescaling $g$ by a positive constant $t$ along the fibers of $p$,
i.e., $g^t$ and $g$ have the same vertical and horizontal distributions $\mathcal V$, $\mathcal H$, and
$g^t\vert_{\mathcal V}=t g\vert_{\mathcal V}$ and  $g^t\vert_{\mathcal H}=g\vert_{\mathcal H}$.
The fibers of $p$ are $g^t$-totally geodesic~\cite[9.68]{Bes-book} so the $T$ tensor vanishes. 
The diameters of $g^t$, $\check g$ satisfy $\mathrm{diam}(g^t)\le\diam(\check g)+O(\sqrt{t})$.
The following lemma finishes the proof of almost flatness of $E$.
\end{proof}

\begin{lem}
The sectional curvatures $K^t$, $\check K$ of $g^t$, $\check g$ satisfy
$|K^t|\le |\check K|+ O(\sqrt{t})$.
\end{lem}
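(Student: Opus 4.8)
The plan is to estimate every sectional curvature $K^t(\sigma)$ of $g^t$ by splitting the $2$-plane $\sigma\subset T_eE$ according to $T_eE=\mathcal H\oplus\mathcal V$ and invoking O'Neill's curvature formulas for the Riemannian submersion $p$. Since the fibers are totally geodesic the $T$-tensor vanishes, so the only nonzero O'Neill tensor is the integrability tensor $A$ of $\mathcal H$; as $p$ and the fixed background metric $g=g^1$ are independent of $t$, the tensors $A$ and $\nabla A$ are bounded on $E$ by a constant depending only on the bundle and $g$. Because the fiber is $1$-dimensional, $\mathcal V$ is a line, so any $2$-plane $\sigma$ either lies in $\mathcal H$ or meets $\mathcal H$ in exactly a line. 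In the first case $K^t(\sigma)$ is the sectional curvature of a horizontal plane; in the second I would choose a $g^t$-orthonormal basis $\{e_1,e_2\}$ of $\sigma$ with $e_1\in\mathcal H$ and $e_2=(\cos\phi)\,X+(\sin\phi)\,\tilde N$, where $X\perp e_1$ is a $g^t$-unit horizontal vector and $\tilde N$ is the $g^t$-unit vector spanning $\mathcal V$.

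Next I would expand $K^t(\sigma)=R^t(e_1,e_2,e_2,e_1)$ by multilinearity into three pieces: the purely horizontal term $\cos^2\phi\,K^t(e_1,X)$, the purely mixed term $\sin^2\phi\,K^t(e_1,\tilde N)$, and the cross term $2\cos\phi\sin\phi\,R^t(e_1,X,\tilde N,e_1)$. The first two are sectional curvatures of adapted planes, for which the canonical-variation formulas (\cite[9.70]{Bes-book}) give, in terms of $g$-orthonormal vectors, $K^t(e_1,X)=\check K(e_1,X)-3t\,|A_{e_1}X|^2$ and $K^t(e_1,\tilde N)=t\,|A_{e_1}N|^2$ with $N$ a $g$-unit vertical vector. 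Hence $|\cos^2\phi\,K^t(e_1,X)|\le|\check K|+O(t)$ and $|\sin^2\phi\,K^t(e_1,\tilde N)|=O(t)$, so both contribute at most $|\check K|+O(t)$.

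The crux is the cross term $R^t(e_1,X,\tilde N,e_1)$, which is the one component of the curvature tensor with exactly one vertical index and is not the sectional curvature of any adapted plane. Writing $\tilde N=N/\sqrt t$ with $N$ a $g$-unit vertical field, I have $R^t(e_1,X,\tilde N,e_1)=t^{-1/2}R^t(e_1,X,N,e_1)$. The O'Neill formula for this ``three horizontal, one vertical'' component expresses it through $\nabla A$; since pairing a vertical vector against $N$ in the metric $g^t$ carries a factor $t$ (because $g^t|_{\mathcal V}=t\,g|_{\mathcal V}$, while $A$ and the relevant covariant derivative are $t$-independent as vectors), one finds $R^t(e_1,X,N,e_1)=O(t)$, so the cross term is $t^{-1/2}\cdot O(t)=O(\sqrt t)$. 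Verifying this scaling — that the one-vertical-index component is $O(t)$ before normalization, equivalently $O(\sqrt t)$ in a $g^t$-orthonormal frame — is the main obstacle, since it requires tracking how $\nabla^t$ and $A$ interact with the anisotropic rescaling rather than merely quoting a sectional-curvature formula. Combining the three estimates gives $|K^t(\sigma)|\le|\check K|+O(\sqrt t)$ uniformly in $\sigma$ and $e$, which is the asserted bound.
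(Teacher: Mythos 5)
Your proposal is correct and follows essentially the same route as the paper's proof: exploit that $\mathcal H$ has codimension one to put a horizontal vector in every $2$-plane, expand the curvature into the purely horizontal, vertizontal, and cross terms, control the first two by the canonical-variation formulas, and identify the one-vertical-index cross term as the source of the $O(\sqrt{t})$ error. The scaling you flag as ``the main obstacle'' is exactly what the paper settles by quoting~\cite[9.28, 9.69]{Bes-book}, which give $\langle R^t(Y,X)U,X\rangle^t=-t\,g((D_XA)_YX,U)$ with the vertical vector $(D_XA)_YX$ independent of $t$, so your heuristic (a factor of $t$ from pairing vertical vectors in $g^t$, hence $O(\sqrt t)$ after normalizing the vertical leg) is precisely the correct mechanism.
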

\begin{proof}
Fix any $2$-plane $\s$ tangent to $E$. 
Since $\mathcal H$ has codimension one, $\s$ contains 
a $g^t$-unit horizontal vector $X$. Let $C$ be a $g^t$-unit vector in $\s$ that is 
$g^t$-orthogonal to $X$. Write $C=U+Y$ where $U\in\mathcal V$, $Y\in\mathcal H$. 
The sectional curvature of $\s$ with respect to  $g^t$ is given by 
\[
\vspace{2pt}
K^t_\s=\langle R^t(C,X)C, X\rangle^t =
\langle R^t(Y,X)Y, X\rangle^t +
2\langle R^t(Y,X)U, X\rangle^t +
\langle R^t(U,X) U, X\rangle^t
\vspace{2pt}
\]
where $\langle C,D\rangle^t:=g^t(C,D)$ and $R^t$ is the curvature tensor of $g^t$. 

Lemma 9.69 of~\cite{Bes-book} relates the $A$ tensors $A^t$, $A$ of $g^t$, $g$
as follows: $A^t_Y X=A_Y X$ and $A^t_X U=t\,A_X U$.
Recall that $A_Y X$ is vertical and $A_X U$ is horizontal. 
The formulas in~\cite[9.28, 9.69]{Bes-book} give\vspace{5pt}
\[
\check g(\check R(\check Y,\check X)\check Y, \check X)-\langle R^t(Y,X) Y, X\rangle^t=
3\langle A^t_Y X, A^t_Y X\rangle^t=3t\,g(A_Y X, A_Y X)
\]
\vspace{-2pt}
\[
\langle R^t(Y,X)U, X\rangle^t=-[\langle (D_X A)_Y X, U\rangle ]^t=-t\,g((D_X A)_Y X, U) 
\]
\vspace{-2pt}
\[
\langle R^t(U,X)U, X\rangle^t=\langle A^t_X U, A^t_X U\rangle^t+ [\langle (D_U A)_X X, U\rangle ]^t
=t^2 g(A_X U, A_X U)\vspace{7pt}
\]
where $[\langle (D_U A)_X X, U\rangle ]^t=0$ by the last formula in~\cite[9.32]{Bes-book}.

Since $g(X,X)=1=g^t(C,C)=g(Y,Y)+tg(U,U)$, 
the vectors $X$, $Y$, $\sqrt{t}\,U$ lie in the $g$-unit disk bundle of $TE$, which is compact, so 
the functions $g(A_Y X, A_Y X)$, $\sqrt{t}\,g((D_X A)_Y X, U)$, $t\, g(A_X U, A_X U)$ are bounded. 

Therefore, if $Y\neq 0$ and $\check\s$ is the projection of $\s$ in $TB$, 
then \[
K^t_\s=\check g(\check R(\check Y,\check X)\check Y, \check X)+
O(\sqrt{t})
=\sqrt{{\check g}(\check Y,\check Y)}\, K_{\check\s}+O(\sqrt{t})\]
and if $Y=0$, then $K^t_\s=t^2 g(A_X U, A_X U)=O(t)$. 
Thus $|K^t_\s|\le |K_{\check\s}|+O(\sqrt{t})$. 
\end{proof}
\vspace{3pt}

\bibliographystyle{amsalpha}
\bibliography{nil-S1}
\end{document}